\newtheorem{theorem}{Theorem}[section]
\newtheorem{lemma}[theorem]{Lemma}
\newtheorem{corollary}[theorem]{Corollary}
\newtheorem*{KT}{Kneser's Theorem}
\newtheorem*{DGM}{Devos-Goddyn-Mohar Theorem}
\newcommand{\N}{\mathbb N}
\newcommand{\Z}{\mathbb Z}
\newcommand{\C}{\mathcal C}
\newcommand{\B}{\mathscr B}
\newcommand{\A}{\mathscr A}
\newcommand{\Dc}{\mathscr D}
\newcommand{\Fc}{\mathcal F}
\newcommand{\vp}{\mathsf v}
\newcommand{\D}{\mathsf D}
\newcommand{\h}{\mathsf h}
\newcommand{\dd}{\mathsf d}
\newcommand{\ord}{\text{\rm ord}}
\newcommand{\supp}{\text{\rm supp}}
\newcommand{\E}{\mathsf E}
\newcommand{\und}{\;\;\;\mbox{ and }\;\;\;}
\newcommand{\be}{\begin{equation}}
\newcommand{\ee}{\end{equation}}
\newcommand{\bnml}{\begin{multline}}
\newcommand{\enml}{\end{multline}}
\newcommand{\ber}{\begin{eqnarray}}
\newcommand{\eer}{\end{eqnarray}}
\newcommand{\nn}{\nonumber}
\newcommand{\Sum}[2]{\underset{#1}{\overset{#2}{\sum}}}
\newcommand{\Summ}[1]{\underset{#1}{\sum}}
\newcommand{\fP}{\mathscr{S}}
\author{David J. Grynkiewicz$^1$}\thanks{The first author is supported by FWF project number M1014-N13}
\author{Luz Elimar Marchan$^2$}\thanks{This work was initiated while the third author was visiting the
Institut f\"ur Mathematik und Wissenschaftliches Rechnen,
Karl-Franzens-Universit\"at Graz, Austria, with partial support of the CDCH, Universidad Central de Venezuela}
\author{Oscar Ordaz$^3$}
\subjclass[2000]{11B75 (20K01)}
\keywords{zero-sum problem, Davenport constant, weighted subsequence sums, setpartition}
\address{$^1$ Institut f\"ur Mathematik und Wissenschaftliches Rechnen,
Karl-Franzens-Universit\"at Graz,
Heinrichstra\ss e 36,
8010 Graz, Austria} \email{diambri@hotmail.com}
\address{$^2$ Departamento de Matem\'aticas, Decanato de Ciencias y Tecnolog\'{i}as,
Universidad Centroccidental Lisandro Alvarado, Barquisimeto,
Venezuela}
\address{$^3$ Departamento de Matem\'aticas y Centro ISYS, Facultad de Ciencias,
Universidad Central de Venezuela, Ap. 47567, Caracas 1041-A,
Venezuela}
\begin{document}
\title{A Weighted Generalization of Two Theorems of Gao}

\begin{abstract}
Let $G$ be a finite abelian group and let $A\subseteq \Z$ be nonempty. Let $\D_A(G)$ denote the minimal integer such that any sequence over $G$ of length $\D_A(G)$ must contain a nontrivial subsequence $s_1\cdots s_r$ such that $\Sum{i=1}{r}w_is_i=0$ for some $w_i\in A$. Let $\E_A(G)$ denote the minimal integer such that any sequence over $G$ of length $\E_A(G)$ must contain a subsequence of length $|G|$, $s_1\cdots s_{|G|}$, such that $\Sum{i=1}{|G|}w_is_i=0$ for some $w_i\in A$. In this paper, we show that $$\E_A(G)=|G|+\D_A(G)-1,$$ confirming a conjecture of Thangadurai and the expectations of Adhikari, et al. The case $A=\{1\}$ is an older result of Gao, and our result extends much partial work done by Adhikari, Rath, Chen, David, Urroz, Xia, Yuan, Zeng and Thangadurai. Moreover, under a suitable multiplicity restriction, we show that not only can zero be represented in this manner, but an entire nontrivial subgroup, and if this subgroup is not the full group $G$, we obtain structural information for the sequence generalizing another non-weighted result of Gao. Our full theorem is valid for more general $n$-sums with $n\geq |G|$, in addition to the case $n=|G|$.
\end{abstract}

\maketitle

\section{Introduction}
We follow the notation of \cite{GaoGer-survey} \cite{Alfred-book} \cite{Alfred-BCN-notes} \cite{oscar-weighted-projectI} concerning sumsets,  sequences and (weighted) subsequence sums. The reader less familiar with this notation can first consult the subsequent notation section, where we provide self-contained definitions for all relevant concepts.

One of the oldest questions in zero-sum additive theory (see \cite{Alfred-book} \cite{Alfred-BCN-notes} for further details), raised by Rogers et al. \cite{Rogers-Davenport} in 1962 (and later popularized by  Davenport's consideration of the same constant), is the question of, given a finite abelian group $G$, what is the minimum length so that every sequence over $G$ of this length contains a subsequence with sum zero. This number is known as the \emph{Davenport constant} of $G$, denoted $\D(G)$, and its determination is one of the more difficult open problems in the field, with its explicit value known only for a few small families of subgroups. Its value also plays an important role in controlling the behavior of factorizations over Krull Monoids.

If we modify the problem to instead ask how long must a sequence over $G$ be to guarantee a subsequence with sum zero \emph{and} length $|G|$, then we obtain the Erd\H{o}s-Ginzburg-Ziv invariant $\E(G)$. That $\E(G)\leq 2|G|-1$ was shown by Erd\H{o}s, Ginzburg and Ziv \cite{egz} in 1961, subsequently sparking a flurry of generalizations and variations. In 1995,  Gao \cite{Gao-M+D-1} \cite[Proposition 5.7.9]{Alfred-book} showed these two problems to be essentially equivalent, by establishing the identity \be\label{gao-eq}\E(G)=|G|+\D(G)-1.\ee

Around the same time, Caro conjectured \cite{carosurvery} a weighted version of the result of Erd\H{o}s, Ginzburg and Ziv. Namely, if given a sequence $W=a_1\cdots a_{|G|}$ of integers of length $|G|$ and sum zero, would any sequence over $G$ of length $2|G|-1$ contain a subsequence $s_1\cdots s_{|G|}$ with $\Sum{i=1}{|G|}a_is_i=0$. After several partial cases were confirmed  \cite{gao-wegz-partialcase} \cite{hamweightegzgroup} \cite{hamweightsrelprime},  the conjecture was finally settled recently in \cite{WEGZ}.

Since then, consideration of various other weighted zero-sum questions has received renewed interest. See \cite{zhuang-collab} \cite{oscar-weighted-projectI} \cite{ordaz-quiroz-cyliccase}  for various examples where, as in Caro's original conjecture, the length of the weight subsequence is equal to the length of the desired subsequence sum. However, the restriction of having the weight sequence be quite small in length made further progress difficult. Allowing the weight sequence $W$ to be equal to the length of the sequence $S$ over $G$, Hamidoune \cite{ham-D_A-paper} was able to give a weighted version of Gao's theorem.

But it is perhaps the weighted variation introduced by Adhikari, et al. \cite{adhi0} \cite{adhi3}
that has received the most attention in the past couple years. Here, instead of considering a fixed sequence of weights, a  nonempty subset $A\subseteq \Z$ of weights is considered. The questions then become, what is the minimum length $\D_A(G)$ so that every sequence over $G$ of this length contains a subsequence $s_1\cdots s_r$ with $\Sum{i=1}{r}a_is_i=0$ for some $a_i\in A$, and what is the minimum length $\E_A(G)$ so that every sequence over $G$ of this length contains a subsequence $s_1\cdots s_{|G|}$ of length $|G|$ with $\Sum{i=1}{|G|}a_is_i=0$ for some $a_i\in A$. Of course, this is equivalent to considering arbitrarily long sequences of weights with fixed support equal to $A$ (and no term having small multiplicity), a view we will adopt to be able to use the same notation from other weighted sequence problems.

Thus, as defined in the next section, if $S\in \Fc(G)$ is a sequence over $G$ and $W\in \Fc(\Z)$ is a sequence with support $\supp(W)=A$, then $\Sigma_n(W^n,S)$ is equal to all elements $g$ that can be represented as a sum $\Sum{i=1}{n}a_is_i=g$ with $s_1\cdots s_{n}$ a subsequence of $S$ and $a_i\in A$. Likewise, $\Sigma(W^{|S|},S)$ is equal to all elements $g$ that can be represented as a sum $\Sum{i=1}{r}a_is_i=g$ with $s_1\cdots s_{r}$ a nontrivial subsequence of $S$ and $a_i\in A$.

Many recent papers have been devoted to calculating or bounding $\D_A(G)$ and $\E_A(G)$ for various subsets $A$ and groups $G$ \cite{adhi2} \cite{adhi3} \cite{griffiths} \cite{luca} \cite{thanga-paper}. In \cite{thanga-paper}, following the common support expressed in the comments and results of several papers, a weighted form of \eqref{gao-eq} was conjectured: \be\label{conj-bound}\E_A(G)=|G|+\D_A(G)-1.\ee  Much of the work of \cite{adhi0} \cite{adhi1} \cite{adhi4} \cite{xia} \cite{weight-Gao-cyclic} was motivated by or devoted to establishing \eqref{conj-bound} under various circumstances. Of particular note is the very recent result of Yuan and Zeng \cite{weight-Gao-cyclic}, confirming \eqref{conj-bound} in the cyclic case, and the work of Adhikari and Chen \cite{adhi1}, confirming \eqref{conj-bound} when $\gcd(A-a_0)=1$, where $a_0\in A$, and which is one of the few results tackling noncyclic groups. Note the lower bound $\E_A(G)\geq |G|+\D_A(G)-1$ can be easily seen by considering an extremal sequence for $\D_A(G)$ of length $\D_A(G)-1$ that avoids representing zero using weights from $A$ concatenated with a sequence of $|G|-1$ zeros.

The main result of this paper, among other consequences, unconditionally confirms \eqref{conj-bound} for an arbitrary finite abelian group. We remark that since the case when $A=\{0\}$ is rather trivial, there is little loss of generality to assume $\gcd(A)=1$, since if $\gcd(A)=d'$, then one may simply divide each element of $A$ by $d'$, resulting in a subset of integers $A'$ with $\gcd(A')=1$, and multiply each term of the sequence $S$ by $d'$, resulting in a new sequence $S'$, and then note that the elements of $G$ which can be represented as weighted sums using the set $A$ and sequence $S$, as described above, are the same as those which can be represented as weighted sums using the set $A'$ and sequence $S'$, that is, $\Sigma_n(W^n,S)=\Sigma_n({W'}^n,S')$.

Due to subtle issues concerning translation invariance, and lack thereof, that arise from the inductive nature of the proof, we need to state Theorem \ref{thm-weighted-gao-inductive} in a rather unusual form involving an over-group $G_0$. For most applications, it likely suffices to only consider the case when $G=G_0$ with $\gamma=\delta=0$, in which case \eqref{sink} holds trivially.

\begin{theorem}\label{thm-weighted-gao-inductive} Let $G_0$ be a finite abelian group,
let $G\leq G_0$ be a subgroup, let $\gamma,\,\delta\in G_0$, let $n\geq |G|$, let
$A\subseteq \Z$ be nonempty with $\gcd(A)=1$, and let $W\in \Fc(\Z)$ be a sequence with $\supp(W)=A$ . Suppose \be\label{sink}S\in\Fc(\gamma+G)\;\;\;\mbox{ and }\;\;\; A\cdot s\subseteq \delta+G\mbox{ for all }s\in \supp(S)\ee and
$|S|\geq n+\D_A(G)-1$. Then there exist a subgroup $H\leq G$, elements $\alpha\in \gamma+G$ and $\beta\in \delta+G$,
and subsequences $S'|S$, $S''|S'$ and $S_0|S$ such that \ber
\label{setup-conds}S',\,S''&\in& \Fc(\alpha+H)\;\;\;\mbox{ and }\;\;\; A\cdot s\subseteq \beta+H\mbox{ for all }s\in \supp(S'),
\\\label{smallsubsequence}
|S'|&\geq&\min\{|S|,\,|S|-(|G/H|-2)\},
\\
|S''|&=&|H|+\D_A(H)-1\label{joyjoy},\\
\label{goal-small}|H|\beta+H&=& \Sigma_{|H|}(W^{|H|},S''),\\
\Sigma_{n}(W^{n},S)&=&(n-|H|-r)\beta+\Sigma_{|H|+r}(W^{|H|+r},S_0)\mbox{ is } H\mbox{-periodic and contains } n\beta\label{goal-big},\\\label{extra-added=thing}\hspace{.5cm}S{S'}^{-1}S''|S_0&\mbox{ and }&|S_0|=|H|+r+\D_A(H)+\D_A(G/H)-2\leq |H|+r+\D_A(G)-1,\eer where $r=|S{S'}^{-1}|\leq \max\{0,\,|G/H|-2\}$.
\end{theorem}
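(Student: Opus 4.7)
The plan is to induct on $|G|$, using the Devos--Goddyn--Mohar theorem as the main sumset-bounding tool (it is the weighted analog of Kneser needed because the sums $\sum w_is_i$ are genuinely weighted). The base case $|G|=1$ is immediate: $\D_A(G)=1$ forces every sumset of interest to be the singleton coset $n\delta+G$, and one takes $H=G$, $\alpha=\gamma$, $\beta=\delta$, $S'=S$, $r=0$, and lets $S''$ be any single term of $S$.

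For the inductive step, I would first analyze the weighted sumset $\Sigma_n(W^n,S)\subseteq n\delta+G$. Iterated extraction of disjoint subsequences of $S$, each of length at most $\D_A(G)$ and with a prescribed weighted sum (this is the definition of $\D_A(G)$ transported into the coset $\gamma+G$), provides many candidate representatives for $\Sigma_n(W^n,S)$; since $|S|\geq n+\D_A(G)-1$, at least $n$ of the terms of $S$ lie in such extracted blocks. Feeding these blocks into the Devos--Goddyn--Mohar theorem gives a Kneser-type lower bound for $|\Sigma_n(W^n,S)|$: either $\Sigma_n(W^n,S)=n\delta+G$ (the case $H=G$, where I take $\alpha=\gamma$, $\beta=\delta$, $S'=S$, $r=0$, and produce $S''$ by repeating the same extraction argument on a length-$(|G|+\D_A(G)-1)$ portion of $S$), or else the sumset has a proper stabilizer $H\subsetneq G$. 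In the proper-stabilizer case, I push $S$ forward through the quotient $G_0\to G_0/H$ and apply the inductive hypothesis to the image $\bar S\in \Fc(\bar\gamma+G/H)$; the induction returns $\bar\alpha,\bar\beta$ and subsequences $\bar S',\bar S_0$, which I lift back to $\alpha,\beta,S',S_0$ in $G_0$. A second, independent application of the inductive hypothesis inside the smaller subgroup $H$---applied to the subsequence of $S'$ whose image in $G/H$ lies in a convenient coset, which has length at least $|H|+\D_A(H)-1$ by a length count---produces $S''$ fulfilling \eqref{joyjoy}--\eqref{goal-small}.

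The over-group $G_0$ is essential precisely so that both recursive calls (one in $G/H\leq G_0/H$, one in $H\leq G_0$) remain instances of the same theorem, since the image sequences sit in cosets rather than at the identity. The main obstacles will be: (i) the bookkeeping that threads translate information through both recursive calls, particularly matching the output $(\alpha,\beta)$ against the outputs of each induction and verifying $\Sigma_n(W^n,S)$ is $H$-periodic with $n\beta$ in it; (ii) the sharp length identity \eqref{extra-added=thing}, which requires combining the length outputs from the $G/H$ and $H$ recursions and yields the bound $r\leq\max\{0,|G/H|-2\}$ directly from the inductive output; and (iii) the inequality $|S_0|\leq|H|+r+\D_A(G)-1$, which is equivalent to the Davenport-type inequality $\D_A(H)+\D_A(G/H)-1\leq\D_A(G)$ and must be verified along the way, most naturally as a byproduct of the same extraction process that initiated the DGM application.
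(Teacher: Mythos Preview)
Your high-level skeleton---induct on $|G|$, apply the Devos--Goddyn--Mohar theorem, and recurse when a proper period appears---does match the paper's overall shape, but there is a genuine gap at the point where DGM returns a \emph{trivial} stabilizer. You write that either $\Sigma_n(W^n,S)=n\delta+G$ or there is a proper stabilizer $H\subsetneq G$, and in the latter case you push to $G/H$. But ``proper'' includes ``trivial'', and when the stabilizer is trivial your quotient recursion is vacuous: $|G/H|=|G|$ and no induction is available. The paper's proof spends its most technical steps (Steps~3 and~6) on exactly this case. The key extra ingredient is the parameter $d=\gcd((A-a_0)\cup\{m\})$: a term $s_i$ satisfies $|A\cdot s_i|=1$ iff $s_i\in G_0[d]$, so if the stabilizer is trivial one first bounds the number of singleton sets $A\cdot s_i$ (otherwise DGM already gives the full coset), and then exploits that the remaining sets lie in cosets of the proper nontrivial subgroup $d\cdot G$. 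A separate combinatorial lemma (Lemma~\ref{keylemma}) on setpartitions of $K$-coset sets is needed to locate an $H$-periodic chunk inside these non-singleton blocks and restart the argument. None of this is visible in your outline, and without it the trivial-stabilizer case does not close. Relatedly, the case $|A|=1$ must be handled separately (the paper uses Theorem~\ref{CCD-d*} here), since then $d=m$, $G[d]=G$, and the $d\cdot G$ mechanism collapses.

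A second, subtler issue is your description of how DGM is fed. The paper does not ``extract disjoint subsequences of length at most $\D_A(G)$ with prescribed weighted sum'' and sum the blocks; it applies DGM directly to the setpartition $\A=(A\cdot s_1)\cdots(A\cdot s_{|S|})$, using $\Sigma_n(W^n,S)=\Sigma^\cup_n(\A)$. Your block-extraction picture is a different (Hamidoune-flavoured) strategy, and it is not clear it produces the sharp structural conclusions \eqref{setup-conds}--\eqref{extra-added=thing}; in particular it does not obviously yield the coset-concentration statement \eqref{smallsubsequence}. Finally, the reduction you dismiss as ``bookkeeping''---showing that once one has \eqref{setup-conds}, \eqref{joyjoy}, \eqref{goal-small} and the weak bound $|S'|\ge|H|+\D_A(G)-1$, the remaining conclusions \eqref{smallsubsequence}, \eqref{goal-big}, \eqref{extra-added=thing} follow---is a substantial three-part argument in the paper (Step~1), relying on Lemma~\ref{confusing-lemma} for extension and on a maximality-of-$H$ contradiction via Lemma~\ref{keylemma} to upgrade \eqref{weak-small-sub} to \eqref{smallsubsequence}. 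Your proposal to obtain $S''$ by a ``second, independent'' induction inside $H$ does not supply this reduction and would, at best, reproduce only \eqref{goal-small} without the sharpened \eqref{smallsubsequence}.
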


We remark (for $G$ nontrivial) that if \be\label{mult-hyp2}\vp_g(S)\leq |S|-|G|+1\ee for all $g\in G_0[d]$, where $\gcd((A-a_0)\cup \{m\})=d$ with $a_0\in A$, where $\vp_g(S)$ denotes the multiplicity of $g$ in $S$, and where $$G[d]=\{g\in G\mid dg=0\}$$ denotes the subgroup that is the kernel of the multiplication by $d$ homomorphism, then Theorem \ref{thm-weighted-gao-inductive} cannot hold with $H$ trivial (in view of \eqref{smallsubsequence}, \eqref{setup-conds} and Lemma \ref{coset-lemma}). Thus, under the multiplicity restriction given by \eqref{mult-hyp2} and assuming $\exp(G)|n$, we not only represent $0$ as a weighted $n$-sum, but also an entire nontrivial subgroup; in fact, the set of elements representable is periodic, and if the entire group is not representable, then we obtain structural conditions on the sequence $S$. Also worth noting are equations \eqref{goal-big} and \eqref{extra-added=thing}, which essentially show that all elements of $G$ that can be represented as weighted sums can be attained using a small subsequence of $S$.
All this mirrors similar developments and generalizations of many other non-weighted zero-sum questions (see \cite{Gao-M+D-1} \cite{WEGZ} \cite{hamconj-paper}   \cite{hamweightegzgroup} \cite{ham-ordaz}  \cite{ols8}  for several examples).

In view of the mentioned trivial lower bound construction for $\E_A(G)$ and the explanation as to why $\gcd(A)=1$ can be assumed, we see that the following is an immediate corollary of Theorem \ref{thm-weighted-gao-inductive}.

\begin{corollary}\label{thm-weight-gao-simpleversion}Let $G$ be a finite abelian group, let $S\in\Fc(G)$, let $n\geq |G|$, let $A\subseteq \Z$ be nonempty, and let $W\in \Fc(\Z)$ be a sequence with $\supp(W)=A$. If $|S|\geq n+\D_A(G)-1$, then \be\label{bab}n\cdot G\cap \Sigma_{n}(W^{n},S)\neq \emptyset.\ee In particular, $$\E_A(G)=|G|+\D_A(G)-1.$$
\end{corollary}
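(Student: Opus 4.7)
The plan is to derive the corollary directly from Theorem~\ref{thm-weighted-gao-inductive}. First I would reduce to the case $\gcd(A)=1$: writing $d'=\gcd(A)$, $A'=A/d'$, and $S'=d'\cdot S$ (each term of $S$ multiplied by $d'$), the identity $\sum w_i s_i=\sum (w_i/d')(d' s_i)$ yields $\Sigma_n(W^n,S)=\Sigma_n({W'}^n,S')$ whenever $\supp(W')=A'$, and the same identity, together with the observation that every sequence over $d'G$ lifts term-by-term to a sequence over $G$, gives $\D_{A'}(d'G)=\D_A(G)$. Since $S'\in\Fc(d'G)$, $|S'|=|S|\geq n+\D_{A'}(d'G)-1$, and $n\cdot(d'G)\subseteq n\cdot G$, it suffices to prove \eqref{bab} with $(d'G,A',S')$ replacing $(G,A,S)$; that is, one may assume $\gcd(A)=1$.

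Assuming $\gcd(A)=1$, I would apply Theorem~\ref{thm-weighted-gao-inductive} with $G_0=G$ and $\gamma=\delta=0$. Hypothesis \eqref{sink} then becomes $S\in\Fc(G)$ and $A\cdot s\subseteq G$, both automatic, while the length condition $|S|\geq n+\D_A(G)-1$ is precisely what is given. The theorem produces a subgroup $H\leq G$, elements $\alpha,\beta\in G$, and subsequences satisfying \eqref{setup-conds}--\eqref{extra-added=thing}; the crucial output is \eqref{goal-big}, which asserts $n\beta\in\Sigma_n(W^n,S)$. Because $\beta\in G$, we have $n\beta\in n\cdot G\cap\Sigma_n(W^n,S)$, establishing \eqref{bab}.

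For the identity $\E_A(G)=|G|+\D_A(G)-1$, I would specialize \eqref{bab} to $n=|G|$: every element of $G$ is killed by $|G|$, so $|G|\cdot G=\{0\}$, and \eqref{bab} forces $0\in\Sigma_{|G|}(W^{|G|},S)$ whenever $|S|\geq|G|+\D_A(G)-1$, giving the upper bound $\E_A(G)\leq|G|+\D_A(G)-1$. The matching lower bound is the construction already flagged in the introduction: concatenate an extremal sequence $T$ for $\D_A(G)$ (of length $\D_A(G)-1$, admitting no nontrivial $A$-weighted zero-sum subsequence) with $|G|-1$ zeros; any length-$|G|$ subsequence of this concatenation must include at least one term of $T$ (there are only $|G|-1$ zeros available), and its $A$-weighted sum equals the $A$-weighted sum of those terms of $T$, which is nonzero by extremality of $T$. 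The only point of any subtlety in this derivation is the bookkeeping in the $\gcd(A)>1$ reduction, specifically the identity $\D_{A'}(d'G)=\D_A(G)$; everything else is an immediate extraction from Theorem~\ref{thm-weighted-gao-inductive}.
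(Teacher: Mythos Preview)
Your proposal is correct and follows essentially the same approach as the paper, which treats the corollary as immediate from Theorem~\ref{thm-weighted-gao-inductive} together with the $\gcd(A)=1$ reduction and the lower-bound construction already discussed in the introduction. Your write-up is in fact more careful than the paper on the reduction step: you pass to the subgroup $d'G$ and verify $\D_{A'}(d'G)=\D_A(G)$, which is exactly what is needed to preserve the length hypothesis, whereas the paper leaves this implicit.
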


Finally, we mention one further result of Gao \cite{Gao-M+D-1}, generalizing an older result of Olson \cite{ols8}, which states that if $G$ is a finite abelian group, $S\in \Fc(G)$ is a sequence over $G$ of length $|G|+\D(G)-1$, and there are at most $|S|-|G/H|+1$ terms of $S$ from any coset $\alpha+H$ with $H<G$ proper, then every element of $G$ can be represented as the sum of an $|G|$-term subsequence of $S$. (In fact, it was recently shown \cite{oscar-weighted-projectI} that this result can be improved by replacing $\D(G)-1$ by $\dd^*(G)$, where $\dd^*(G)$ is as defined in the subsequent section.)

Another consequence of Theorem \ref{thm-weighted-gao-inductive} is the following immediate corollary, generalizing  this second result of Gao.

\begin{corollary}\label{thm-weight-gao-simpleversionii}Let $G$ be a finite abelian group, let $S\in\Fc(G)$, let $n\geq |G|$, let $A\subseteq \Z$ be nonempty with $\gcd(A)=1$, and let $W\in \Fc(\Z)$ be a sequence with $\supp(W)=A$. Suppose $|S|\geq n+\D_A(G)-1.$ Then either $$ \Sigma_{n}(W^{n},S)=G,$$ or else there are $\alpha,\,\beta\in G$, $H<G$ and $S'|S$ such that \ber\nn|S'|&\geq& |S|-|G/H|+2,\\ \supp(S')\subseteq \alpha+H&\und&A\cdot s\subseteq \beta+H\mbox{ for all }\nn
s\in \supp(S').\eer
\end{corollary}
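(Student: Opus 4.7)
The plan is to apply Theorem~\ref{thm-weighted-gao-inductive} with the choices $G_0=G$ and $\gamma=\delta=0$, and then split into two cases according to whether the subgroup $H$ produced by the theorem is all of $G$ or proper.

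With these choices the hypothesis \eqref{sink} is automatic: $S\in\Fc(G)=\Fc(\gamma+G)$ holds by hypothesis, and $A\cdot s\subseteq G=\delta+G$ trivially for every $s\in G$. Since in addition $n\geq|G|$, $\gcd(A)=1$, and $|S|\geq n+\D_A(G)-1$, Theorem~\ref{thm-weighted-gao-inductive} applies and supplies a subgroup $H\leq G$, elements $\alpha,\beta\in G$, and subsequences $S',\,S'',\,S_0\mid S$ satisfying \eqref{setup-conds}--\eqref{extra-added=thing}.

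The heart of the matter is \eqref{goal-big}, which states that $\Sigma_{n}(W^{n},S)$ is $H$-periodic and contains $n\beta$; hence it contains the whole coset $n\beta+H$. If $H=G$, this coset is all of $G$, so $\Sigma_{n}(W^{n},S)=G$ and we are in the first alternative of the corollary. Otherwise $H<G$ is proper, so $|G/H|\geq 2$, which gives $|S|-(|G/H|-2)\leq|S|$, and hence the minimum in \eqref{smallsubsequence} simplifies to $|S|-|G/H|+2$. Taking the $S'$ produced by the theorem, the coset conditions $\supp(S')\subseteq\alpha+H$ and $A\cdot s\subseteq\beta+H$ for every $s\in\supp(S')$ are read off directly from \eqref{setup-conds}, and the length bound $|S'|\geq|S|-|G/H|+2$ is read off directly from \eqref{smallsubsequence}. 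This is precisely the second alternative.

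There is essentially no real obstacle here; the corollary is a direct unpacking of the structural conclusion of Theorem~\ref{thm-weighted-gao-inductive}, with the only minor point being the trivial case analysis of the $\min$ in \eqref{smallsubsequence} depending on whether $H=G$ or $H<G$.
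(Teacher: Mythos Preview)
Your proof is correct and follows exactly the approach the paper intends: the paper simply calls this an ``immediate corollary'' of Theorem~\ref{thm-weighted-gao-inductive}, and you have accurately unpacked the application with $G_0=G$ and $\gamma=\delta=0$, splitting on whether the resulting $H$ equals $G$ or is proper.
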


\bigskip
\section{Notation and Preliminaries} \label{1}
\bigskip

\subsection{Sumsets} Throughout, all abelian groups will be written additively.
Let $G$  be an abelian group, and let  $A,\, B \subseteq G$ be nonempty subsets. Then  $$A+B = \{a+b
\mid a \in A,\, b \in B \}$$  denotes their  \emph{sumset}. We abbreviate $A\underset{h}{\underbrace{+\ldots +}}A$ by $hA$. For $g\in G$, we let $g+A=\{g+a\mid a\in A\}$ and $-A=\{-a\mid a\in A\}$. For $k\in \Z$ and $A\subseteq G$, we let $$k\cdot A=\{ka\mid a\in A\},$$ and for $A\subseteq \Z$ and $g\in G$, we let $$A\cdot g=\{ag\mid a\in A\}.$$ The  \emph{stabilizer}  of $A$ is defined as $$\mathsf{H}(A) = \{ g \in G \mid g +A = A\},$$ and $A$ is called \emph{periodic}  if $\mathsf{H}(A) \ne \{0\}$, and \emph{aperiodic} otherwise. If $A$ is a union of $H$-cosets (i.e., $H\leq \mathsf{H}(A)$), then we say $A$ is \emph{$H$-periodic}. The order of an element $g\in G$ is denoted $\ord(g)$, and we use $$\phi_H:G\rightarrow G/H$$ to denote the natural homomorphism modulo $H$. For $A\subseteq \Z$, we use $\gcd(A)$ to denote the greatest common divisor of the elements of $A$.

\subsection{Sequences}
Given a set $G_0$ (often a subset of an abelian group), we let $\Fc(G_0)$ denote the free abelian monoid with basis $G_0$ written multiplicatively. The elements of $\Fc(G_0)$ are then just multi-sets over $G_0$, but following long standing tradition, we refer to the $S\in \Fc(G_0)$ as \emph{sequences}. We write
sequences $S \in \mathcal F (G_0)$ in the form
$$
S =  s_1\cdots s_r=\prod_{g \in G_0} g^{\vp_g (S)}\,, \quad \text{where} \quad
\vp_g (S)\geq 0\mbox{ and } s_i\in G_0.
$$
We call $|S|:=r=\Summ{g\in G_0}\vp_g(S)$ the \emph{length} of $S$, and  $\mathsf v_g (S)\in \N_0$  the \ {\it multiplicity} \ of $g$ in
$S$. The \emph{support} of $S$ is $$\supp(S):=\{g\in G_0\mid \vp_g(S)>0\}.$$  A sequence $S_1 $ is called a \emph{subsequence}  of
$S$ if $S_1 | S$  in $\mathcal F (G_0)$  (equivalently,
$\vp_g (S_1) \leq\vp_g (S)$  for all $g \in G_0$), and in such case, $S{S_1}^{-1}$ or ${S_1}^{-1}S$ denotes the subsequence of $S$ obtained by removing all terms from $S_1$.
Given two sequences $S,\,T\in
\Fc(G_0)$, we use $\gcd(S,T)$ to denote the longest subsequence dividing
both $S$ and $T$, and we let $$\h(S):=\max \{\vp_g (S) \mid g \in G_0 \}$$ denote the maximum multiplicity of a term of $S$. If $\h(S)=1$, then we say $S$ is \emph{squarefree}.
Given any map $\varphi: G_0\rightarrow G'_0$, we extend $\varphi$ to a map of sequences,
$\varphi: \Fc(G_0)\rightarrow \Fc(G'_0)$,  by letting $\varphi(S):=\varphi(s_1)\cdots \varphi(s_r)$.

\subsection{Setpartitions} For  a subset $G_0$ of an abelian group, let $\mathscr{S}(G_0)=\Fc(X)$, where $X$ is the set of all finite, nonempty subsets of $G_0$. A \emph{setpartition} over $G_0$ is a sequence $\A=A_1\cdots A_n\in \mathscr{S}(G_0)$, where $A_i\subseteq G_0$ are finite and nonempty. When we refer to an \emph{$n$-setpartition}, we mean a setpartition of length $n$. The sequence partitioned by $\A$ is then $$\mathsf{S}(\A):=\prod_{i=1}^n\prod_{g\in A_i}g\in \Fc(G_0),$$ and a sequence $S\in \Fc(G_0)$ is said to have an $n$-setpartition if $S=\mathsf{S}(\A)$ for some $\A\in \fP(G_0)$ with $|\A|=n$; necessary and sufficient conditions for this to hold (see \cite[Proposition 2.1]{EGZ-II}) are that $$\h(S)\leq n\leq |S|.$$

\subsection{Subsequence Sums} If $G_0$ is a subset of an abelian group $G$ and $S=s_1\cdots s_r\in \Fc(G_0)$, with $s_i\in G_0$, then the \emph{sum} of $S$ is $$\sigma(S):=\Sum{i=1}{r}s_i=\Summ{g\in G_0}\vp_g(S)g.$$ We say $S$ is \emph{zero-sum}  if $\sigma(S)=0$. We adapt the convention that the sum of the trivial/empty sequence is zero. We say that at most $n$ terms of the
sequence $S$ are from a given subset $A \subseteq G$ if
$$
|\{ i \in [1,r] \mid s_i \in A \} | \le n.$$ Similar language is used when counting the number of terms of $S$ with (or without) a certain property (always counting terms with multiplicity).
For $g\in G$ and $w\in \Z$, we let $g+S=(g+s_1)\cdots(g+s_r)\in\Fc(g+G_0)$ and $w*S=(ws_1)\cdots (ws_r)\in \Fc(w\cdot G_0)$.

Let $S\in\Fc(G_0)$, $W\in \Fc(\Z)$ and $s=\min\{|S|,\,|W|\}$.
Define $$W \cdot S = \bigl\{\sum_{i=1}^s w_i g_i \mid  w_1 \cdots
 w_s \ \text{is a subsequence of } \ W \ \text{and} \
  g_1 \cdots  g_s \ \text{is a subsequence of } \ S
 \bigr\},$$ and for $1\leq n\leq s$, let
\ber\Sigma_n(W,S) &=& \left\{g\in W'\cdot S': S'|S,\,W'|W\mbox{ and }|W'|=|S'|=n\right\}\nn\\\nn
\Sigma_{\leq
n}(W,S)&=&\bigcup_{i=1}^{n}\Sigma_i(W,S)\quad \mbox{ and }\quad \Sigma_{\geq n}(W,S)=\bigcup_{i=n}^{s}\Sigma_i(W,S),\\\nn\Sigma(W,S)&=&\Sigma_{\leq s}(W,S).\nn\eer If $W=1^{|S|}$, then $\Sigma(W,S)$ (and other such notation) is abbreviated by $\Sigma(S)$, which is the usual notation for the set of \emph{subsequence sums}. Note that $\Sigma_{|W|}(W,S)=W\cdot S$ when $|W|\leq |S|$, and that the $\Sigma$-notation can likewise be extended to define $\Sigma_n(\A)$ with $\A\in \fP(G_0)$, etc. For example, 
$$\Sigma_n(\A)=\{\Sum{i=1}{n}A_i\mid A_1\cdots A_n\mbox{ is a sub-setpartition of }\A\}.$$

If $\A\in \fP(G_0)$ is a setpartition, we let $$\Sigma^\cup(\A)=\bigcup_{A\in \Sigma(\A)}A\und \Sigma_n^\cup(\A)=\bigcup_{A\in \Sigma_n(\A)}A$$ and likewise extend other similar $\Sigma$ notation for sequences (if one associates a sequence with the corresponding setpartition having all sets of size one, then the superscript $\cup$ would no longer be necessary). Also, $$g+\A:=(g+A_1)\cdots(g+A_r)\in  \fP(g+G_0).$$

\subsection{Preliminary Results}
For the proof, we will need several basic facts concerning the Davenport constant. First, if $G\cong C_{n_1}\oplus \ldots\oplus C_{n_r}$ with $n_1|\ldots|n_r$, then define $$\dd^*(G):=\Sum{i=1}{r}(n_i-1).$$ Second, \be\label{weighted-triv-dav-bound}\D_A(G)\leq \D(G)\ee for all nonempty $A\subseteq \Z$, as is easily seen by taking any $S\in \Fc(G)$ with $|S|=\D(G)$ and applying the definition of $\D(G)$ the sequence $a*S$, where $a\in A$. Next
(see \cite[Propositions 5.1.4 and 5.1.8]{Alfred-book}), \be\label{trivial-dav-bound}\dd^*(G)<\D(G)\leq |G|.\ee

For the proof, we will use \emph{both} the Devos-Goddyn-Mohar Theorem \cite{Devos-setpartition} and a recently proved consequence \cite{oscar-weighted-projectI} of the partition theorem from \cite{ccd}.

\begin{DGM}\label{DGM-devos-etal-thm} Let $G$ be an abelian group, let $\A\in \fP(G)$ be a setpartition, and let $n\in \Z^+$ with $n\leq |\A|$. If $H=\mathsf{H}(\Sigma^\cup_n(\A))$, then $$|\Sigma^\cup_n(\mathscr{A})|\geq (\Summ{g\in G/H}\min\{n,\,\vp_g(\mathsf{S}(\phi_H(\A))\}-n+1)|H|.$$
\end{DGM}

We remind the reader that necessary and sufficient conditions for a sequence $S\in \Fc(G)$ to have an $n$-setpartition are $\h(S)\leq n\leq |S|$.

\begin{theorem}\label{CCD-d*} Let $G$ be a finite abelian group, let $S,\,S'\in\Fc(G)$ with $S'|S$, and let $n\geq \dd^*(G)$ with $\h(S')\leq n\leq |S'|$. Then $S$ has a subsequence $S''$ with $|S''|=|S'|$ such that there is an $n$-setpartition $\A=A_1\cdots A_n\in \fP(G)$ with $\mathsf{S}(\A)=S''$ and either:

(i)  $|\Sum{i=1}{n}A_i|\geq \min\{|G|,\,|S'|-n+1\},$ or

(ii) there exist a proper, nontrivial subgroup $H$ and $\alpha\in G$ such that $\Sum{i=1}{n}A_i$ is $H$-periodic, $|\Sum{i=1}{n}A_i|\geq (e+1)|H|$, and all but $e\leq |G/H|-2$ terms of $S$ are from the same coset $\alpha+H$.
\end{theorem}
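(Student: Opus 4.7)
The plan is to take an extremal $n$-setpartition and apply the Devos--Goddyn--Mohar (DGM) theorem. By the criterion recalled just before the statement, $\h(S')\leq n\leq |S'|$ ensures $S'$ itself admits an $n$-setpartition. Consider all pairs $(S'',\A)$ with $S''\mid S$, $|S''|=|S'|$, $\h(S'')\leq n$, and $\A=A_1\cdots A_n\in\fP(G)$ an $n$-setpartition of $S''$. Select such a pair maximizing $|\Sigma_n^\cup(\A)|=|A_1+\cdots+A_n|$, breaking ties lexicographically on a secondary invariant such as the multiset of cardinalities $\{|A_i|\}$. Set $H:=\mathsf{H}(A_1+\cdots+A_n)$ and $\bar S:=\phi_H(S'')$.

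Applying DGM to $\A$ yields
\[
|A_1+\cdots+A_n|\;\geq\;\Bigl(\sum_{\bar g\in G/H}\min\{n,\vp_{\bar g}(\bar S)\}-n+1\Bigr)|H|.
\]
If this lower bound is at least $\min\{|G|,|S'|-n+1\}$, conclusion (i) holds. Otherwise $|A_1+\cdots+A_n|<|G|$ forces $H<G$, while a trivial $H$ would give $\sum_{\bar g}\min\{n,\vp_{\bar g}(\bar S)\}=|S''|=|S'|$ (since $\h(\bar S)=\h(S'')\leq n$), again yielding (i); so $H$ is both proper and nontrivial. Writing $e+1:=|A_1+\cdots+A_n|/|H|$, the DGM bound combined with $|A_1+\cdots+A_n|<|S'|-n+1$ forces $\sum_{\bar g}\min\{n,\vp_{\bar g}(\bar S)\}\leq e+n<|S'|$, so (by the same equality argument) there must exist a coset $\alpha+H$ with $\vp_{\bar\alpha}(\bar S)>n$. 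The residual bound $\sum_{\bar g\neq\bar\alpha}\min\{n,\vp_{\bar g}(\bar S)\}\leq e$ then shows at most $e$ terms of $S''$ lie outside $\alpha+H$, and $e\leq|G/H|-2$ follows since $(e+1)|H|<|G|$.

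The main technical obstacle is upgrading from $S''$ to $S$: one must conclude that at most $e$ terms of $S$ (not merely $S''$) lie outside $\alpha+H$, as required by (ii). Supposing for contradiction some term $s\in S\setminus S''$ lies outside $\alpha+H$, pick a term $t\in\alpha+H$ with $\vp_t(S'')>0$ (abundant, as more than $n$ such terms exist in $S''$) and form $S''':=S''t^{-1}s$. Build a new $n$-setpartition $\A'$ of $S'''$ by inserting $s$ into a component $A_j$ so that the projected sum $\phi_H(A_j\setminus\{t\}\cup\{s\}+\sum_{i\neq j}A_i)$ picks up a fresh coset representative in $G/H\setminus\phi_H(A_1+\cdots+A_n)$; this strictly enlarges $|A_1+\cdots+A_n|$, contradicting maximality. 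The delicate points are (a)~ensuring $\vp_s(S''')\leq n$, for otherwise $n$ copies of $s$ already sit outside $\alpha+H$, forcing $e\geq n$ and clashing with $e\leq |G/H|-2$ under the hypothesis $n\geq\dd^*(G)$ (which controls $\dd^*(G/H)$ and hence $|G/H|$); and (b)~verifying that some $A_j$ admits an insertion producing a genuinely new coset representative, where the hypothesis $n\geq\dd^*(G)$ supplies the flexibility in $G/H$ via Kneser-type stabilizer considerations on the partial sums $A_j+\sum_{i\neq j}A_i$.
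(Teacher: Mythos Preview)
The paper does not prove this theorem: it is quoted in the preliminaries as a result established in \cite{oscar-weighted-projectI} as a consequence of the partition theorem from \cite{ccd}. So there is no in-paper proof to compare against; the cited proof route goes through a Cauchy--Davenport analog for setpartitions, not merely DGM plus extremality.

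Your proposed route via DGM and a maximal-sumset swap argument is a different strategy, and the outline through the line ``at most $e$ terms of $S''$ lie outside $\alpha+H$, and $e\leq |G/H|-2$'' is fine. The genuine gap is in the upgrade from $S''$ to $S$, precisely at the two points you flagged as delicate. For (a), the claimed contradiction does not follow: you assert that $e\geq n$ clashes with $e\leq |G/H|-2$ because $n\geq\dd^*(G)$ ``controls $|G/H|$'', but $\dd^*(G)$ does not bound $|G/H|-1$ in general. Take $G=C_2^k$ with $k\geq 4$ and $H$ of order $2$: then $\dd^*(G)=k$ while $|G/H|-2=2^{k-1}-2>k$, so $n=\dd^*(G)\leq e\leq |G/H|-2$ is perfectly consistent. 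For (b), the assertion that some component $A_j$ admits an insertion of $s$ (after deleting a $t\in\alpha+H$) that strictly enlarges the sumset is the heart of the matter and is not established by ``Kneser-type stabilizer considerations''. The stabilizer $H$ of the original sumset gives no a priori control on $\sum_{i\neq j}A_i$, nor is it clear that one can choose $j$ and $t$ so that the modified $A_j$ both remains a set (i.e.\ $s\notin A_j$) and pushes the sum into a new $H$-coset; indeed, the few $A_i$ that already meet cosets other than $\alpha+H$ may be exactly the ones obstructing any gain, and you have not shown there are enough $A_j\subseteq\alpha+H$ to work with (this again would need $n>e$, which fails as above).

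This is exactly where the partition theorem of \cite{ccd} does work that DGM alone does not: it provides, under the hypothesis $n\geq\dd^*(G)$, a setpartition of a subsequence of $S$ (not merely of a given $S'$) achieving the bound or exhibiting the coset structure directly, so no after-the-fact swapping is required. If you want to salvage the DGM-only approach, you would need a substantive argument replacing (a) and (b); as written, the proposal is a sketch with a real hole at the swap step.
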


We will also need Kneser's Theorem \cite{kt} \cite{natbook} \cite{taobook}, which is a particular case of the Devos-Goddyn-Mohar Theorem \cite{Devos-setpartition}.

\begin{KT} Let $G$ be an abelian group and let $A_1,\ldots,A_n\subseteq G$. If $H=\mathsf{H}(\Sum{i=1}{n}A_i)$, then $$|\Sum{i=1}{n}\phi_H(A_i)|\geq
\Sum{i=1}{n}|\phi_H(A_i)|-n+1.$$
\end{KT}

\bigskip
\section{The Proof}
\bigskip

We begin with a series of simple lemmas.

\bigskip

\begin{lemma}\label{D-lemma} Let $G$ be an abelian group, let $H\leq G$ and let $A\subseteq \Z$ be nonempty. Then $$\D_A(G)\geq \D_A(H)+\D_A(G/H)-1.$$
\end{lemma}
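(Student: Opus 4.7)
The plan is to prove the inequality by constructing a sequence $S\in\Fc(G)$ of length $\D_A(H)+\D_A(G/H)-2$ that admits no nontrivial $A$-weighted zero-sum subsequence. Since the defining property of $\D_A(G)$ forces every sequence over $G$ of length $\D_A(G)$ to have such a subsequence, existence of this $S$ immediately gives $\D_A(G)\geq |S|+1=\D_A(H)+\D_A(G/H)-1$. The construction is the standard concatenation trick: glue together extremal sequences for the subgroup $H$ and for the quotient $G/H$, after lifting the latter arbitrarily to $G$.

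More precisely, by the definition of $\D_A(H)$, I would choose $T_1\in\Fc(H)$ with $|T_1|=\D_A(H)-1$ such that no nontrivial subsequence $s_1\cdots s_r$ of $T_1$ satisfies $\sum_{i=1}^r a_is_i=0$ for any choice of $a_i\in A$. Likewise, pick $\bar T\in\Fc(G/H)$ with $|\bar T|=\D_A(G/H)-1$ enjoying the analogous property in $G/H$. Choose arbitrary preimages in $G$ for the terms of $\bar T$ under $\phi_H\colon G\to G/H$, and let $T_2\in\Fc(G)$ be the resulting lift, so that $\phi_H(T_2)=\bar T$ and $|T_2|=\D_A(G/H)-1$. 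Then $S:=T_1T_2\in\Fc(G)$ has the required length.

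To verify that $S$ has no nontrivial $A$-weighted zero-sum, I would argue by contradiction: suppose $s_1\cdots s_r\mid S$ is nontrivial with $\sum_{i=1}^r a_is_i=0$ for some $a_i\in A$. Partition the indices according to whether $s_i$ comes from $T_1$ or from $T_2$, and apply $\phi_H$ to the identity; since every term of $T_1$ lies in $H$, those summands vanish modulo $H$, leaving
$$\sum_{i\in I_2}a_i\phi_H(s_i)=0\quad\text{in }G/H,$$
where $I_2$ collects the indices of terms coming from $T_2$. If $I_2\neq\emptyset$, this is a nontrivial $A$-weighted zero-sum subsequence of $\bar T$, contradicting the choice of $\bar T$. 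Otherwise $I_2=\emptyset$, and the original identity is already a nontrivial $A$-weighted zero-sum of a subsequence of $T_1$ inside $H$, contradicting the choice of $T_1$. I do not anticipate a real obstacle here: the argument reduces to linearity of $\phi_H$ along the short exact sequence $0\to H\to G\to G/H\to 0$, used only to commute the projection with the $\Z$-linear weighted sum.
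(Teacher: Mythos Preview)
Your proposal is correct and follows essentially the same approach as the paper: concatenate an extremal sequence for $H$ with a lift of an extremal sequence for $G/H$, then use $\phi_H$ to derive a contradiction from any putative $A$-weighted zero-sum. The only minor point is that the paper first disposes of the case where $\D_A(H)$ or $\D_A(G/H)$ is infinite (since the lemma is stated for arbitrary abelian $G$), whereas your construction tacitly assumes both are finite; this is a one-line fix.
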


\begin{proof} If $\D_A(H)$ or $\D_A(G/H)$ is infinite, then clearly so is $\D_A(G)$. Therefore we may assume both these quantities are finite. Let $S\in \Fc(H)$ be a subsequence of length $\D_A(H)-1$ such that $0\notin \Sigma(W^{|S|},S)$, where $W\in \Fc(\Z)$ is a subsequence with $\supp(W)=A$, and let $T\in \Fc(G)$ be a subsequence of length $\D_A(G/H)-1$ such that $0\notin \Sigma(W^{|T|},\phi_H(T))$. Then $0\notin \Sigma(W^{|ST|},ST)$, whence $\D_A(G)-1\geq |S|+|T|= \D_A(H)+\D_A(G/H)-2$.
\end{proof}

\bigskip

\begin{lemma}\label{translation-lemma} Let $G$ be an abelian group of exponent $m$, let $S\in \Fc(G)$, let $n\in \Z^+$ with $n\leq |S|$, let $A\subseteq \Z$ be nonempty with $\gcd((A-a_0)\cup \{m\})=d$, where $a_0\in A$, and let $W\in \Fc(\Z)$ be a sequence with $\supp(W)=A$. Then $$\Sigma_n(W^n,g+S)=na_0g+\Sigma_n(W^n,S)$$ for every $g\in G[d]$.
\end{lemma}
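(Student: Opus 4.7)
The plan is to unpack the definitions and then exploit a simple pointwise identity describing how elements of $A$ act on $G[d]$. Note that the definition of $\Sigma_n(W^n,T)$ reduces, because $W^n$ has every element of $A$ occurring with multiplicity at least $n$, to the set of all sums $\sum_{i=1}^n w_is_i$ with $w_i\in A$ arbitrary and $s_1\cdots s_n$ any $n$-term subsequence of $T$.

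The key observation is that for every $w\in A$ and every $g\in G[d]$, one has $wg = a_0 g$. Indeed, since $d=\gcd((A-a_0)\cup\{m\})$ divides $w-a_0$, we may write $w-a_0 = kd$ for some $k\in \Z$, and since $g\in G[d]$ gives $dg=0$, we obtain $(w-a_0)g = 0$, i.e.\ $wg=a_0 g$. Summing over any $n$-tuple $w_1,\dots,w_n\in A$ therefore yields
$$\Sum{i=1}{n}w_i g = na_0 g.$$

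With this identity in hand, both inclusions are immediate. For the forward inclusion, any element of $\Sigma_n(W^n,g+S)$ can be written as
$$\Sum{i=1}{n} w_i(g+s_i) = \Bigl(\Sum{i=1}{n}w_i\Bigr)g + \Sum{i=1}{n}w_is_i = na_0g + \Sum{i=1}{n}w_is_i,$$
with $s_1\cdots s_n\mid S$ and $w_i\in A$, which lies in $na_0g+\Sigma_n(W^n,S)$. For the reverse inclusion, given $na_0g + \Sum{i=1}{n}w_is_i$ with $s_1\cdots s_n\mid S$ and $w_i\in A$, reapply the identity to rewrite $na_0 g = \Sum{i=1}{n}w_ig$, yielding $\Sum{i=1}{n}w_i(g+s_i)\in \Sigma_n(W^n,g+S)$.

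There is no serious obstacle here; the only thing to be careful about is confirming that the multiplicity of each element of $A$ in $W^n$ is large enough to allow the same $n$-tuple of weights $(w_1,\dots,w_n)$ to be used on both sides, which is guaranteed since each weight appears at least $n$ times in $W^n$.
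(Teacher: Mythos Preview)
Your proof is correct and follows essentially the same route as the paper: both arguments reduce to the pointwise identity $wg=a_0g$ for $w\in A$ and $g\in G[d]$ (equivalently, $A\subseteq a_0+d\Z$), and then expand $\sum_i w_i(g+s_i)$ accordingly. You are simply more explicit about the two inclusions and about the multiplicity of weights in $W^n$, which the paper leaves implicit.
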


\begin{proof} Note $\gcd((A-a_0)\cup \{m\})=d$ implies $A\subseteq a_0+d\Z$ with $d|m$.
Now, by our hypotheses, we have $$\Sum{i=1}{n}w_{j_i}(g+s_i)=\Sum{i=1}{n}a_0 g+\Sum{i=1}{n}w_{j_i}s_i=na_0g+\Sum{i=1}{n}w_{j_i}s_i,$$ for $g\in G[d]$ and any $s_1\cdots s_n|S$ with $w_{j_i}\in A$ and $s_i\in G$.
\end{proof}

\bigskip

\begin{lemma}\label{coset-lemma} Let $G$ be an abelian group of exponent $m$, let $K\leq G$ and $\beta\in G$, let $A\subseteq \Z$ be nonempty with $\gcd(A\cup\{m\})=1$ and $\gcd((A-a_0)\cup \{m\})=d$, where $a_0\in A$. Let $g_1,\,g_2\in G$ and suppose $A\cdot g_i\subseteq \beta+K$ for $i=1,2$. Then $dg_1,\,dg_2\in K$ and there exists $\alpha\in G$ such that $g_1,\,g_2\in \alpha+K$.
\end{lemma}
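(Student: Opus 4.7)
The plan is to exploit the hypothesis $A\cdot g_i\subseteq \beta+K$ to transfer the gcd conditions on $A$ into divisibility conditions inside the quotient $G/K$, by looking at the subgroup of $\Z$ consisting of those integers $n$ for which $n g_i$ (or $n(g_1-g_2)$) lies in $K$.

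First I would prove $d g_i\in K$ for $i=1,2$. Fix $i$, and for any $a\in A$ note that both $a g_i$ and $a_0 g_i$ lie in $\beta+K$, so their difference $(a-a_0)g_i$ lies in $K$. Since $m g_i=0\in K$ (as $m=\exp(G)$), the set
\[
\Lambda_i:=\{n\in\Z\mid n g_i\in K\}
\]
is a subgroup of $\Z$ containing $(A-a_0)\cup\{m\}$, hence containing the subgroup these generate, which is $d\Z$ by definition of $d$. Therefore $d\in\Lambda_i$, i.e. $d g_i\in K$, as required.

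For the second claim I would argue similarly, but now comparing $g_1$ and $g_2$ directly. For every $a\in A$ one has $a g_1,\,a g_2\in\beta+K$, so $a(g_1-g_2)\in K$; and $m(g_1-g_2)=0\in K$. Hence
\[
\Lambda:=\{n\in\Z\mid n(g_1-g_2)\in K\}
\]
is a subgroup of $\Z$ containing $A\cup\{m\}$, and therefore containing $\gcd(A\cup\{m\})\Z=\Z$ by the hypothesis $\gcd(A\cup\{m\})=1$. In particular $1\in\Lambda$, so $g_1-g_2\in K$, and taking $\alpha=g_1$ yields $g_1,g_2\in\alpha+K$.

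There is no real obstacle here beyond correctly bookkeeping the two different gcds: $\gcd((A-a_0)\cup\{m\})=d$ controls the behavior of each individual $g_i$ modulo $K$, while the stronger hypothesis $\gcd(A\cup\{m\})=1$ is exactly what forces $g_1$ and $g_2$ into the \emph{same} coset of $K$. Both steps reduce to the observation that an additive subgroup of $\Z$ containing a prescribed set of integers contains the subgroup they generate, applied to the subgroup of integers that annihilate the relevant element modulo $K$.
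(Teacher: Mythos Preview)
Your proof is correct and follows essentially the same approach as the paper: both argue that the differences $(a-a_0)g_i$ (resp.\ $a(g_1-g_2)$) lie in $K$, and then use the gcd hypothesis to produce $dg_i\in K$ (resp.\ $g_1-g_2\in K$) via integer linear combinations. Your framing in terms of the annihilator subgroups $\Lambda_i,\Lambda\leq\Z$ is a slightly more explicit packaging of the paper's phrase ``an appropriate linear combination,'' but the content is identical.
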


\begin{proof}
Let $A=\{a_0,a_1,\ldots,a_r\}$. Then $A\cdot g_i\subseteq \beta+K$ implies $(a_j-a_0)g_i\in K$ for $j=0,1,\ldots,r$. Since $\gcd((A-a_0)\cup \{m\})=d$, we can find an appropriate linear combination of the $(a_j-a_0)g_i$ which is equal to $dg_i$, whence $dg_i\in K$, as desired.

Since $A\cdot g_i\subseteq \beta+K$ for $i=1,2$, we have $a_j(g_1-g_2)\in K$ for all $j$. Thus, since $\gcd(A\cup\{m\})=1$, we can find an appropriate linear combination of the $a_j(g_1-g_2)$ which is equal to $g_1-g_2$, whence $g_1-g_2\in K$, as desired.
\end{proof}

\bigskip

\begin{lemma}\label{confusing-lemma} Let $G_0$ be a finite abelian group, let $H\leq G\leq G_0$ be subgroups, let $\alpha\in G_0$,  let $A\subseteq \Z$ be nonempty, and let $S\in\Fc(\alpha+G)$ with $|S|\geq \D_A(G/H)$. If $A\cdot \alpha\subseteq \beta+H$ for some  $\beta \in G_0$, then there exists a nontrivial subsequence $s_1\cdots s_r|S$ and $w_{j_i}\in A$ such that $\Sum{i=1}{r}w_{j_i}s_i\in r\beta+H$, where $s_i\in \alpha+G$.
\end{lemma}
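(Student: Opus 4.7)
The plan is to reduce this to the definition of $\D_A(G/H)$ by translating by $-\alpha$ and projecting. Since $S\in \Fc(\alpha+G)$, each term has the form $s=\alpha+g$ for some $g\in G$, so writing $S=(\alpha+g_1)\cdots(\alpha+g_{|S|})$, the sequence $T:=\phi_H(g_1)\cdots \phi_H(g_{|S|})\in\Fc(G/H)$ is a sequence over $G/H$ of length $|S|\geq \D_A(G/H)$.

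By the very definition of $\D_A(G/H)$, there is a nontrivial subsequence $\phi_H(g_{i_1})\cdots \phi_H(g_{i_r})$ of $T$ and weights $w_{j_1},\ldots,w_{j_r}\in A$ with $\Sum{k=1}{r}w_{j_k}\phi_H(g_{i_k})=0$ in $G/H$, i.e., $\Sum{k=1}{r}w_{j_k}g_{i_k}\in H$. Setting $s_k:=\alpha+g_{i_k}\in \supp(S)$, I will show the subsequence $s_1\cdots s_r$ with the same weights $w_{j_k}$ does the job: I compute
$$\Sum{k=1}{r}w_{j_k}s_k \;=\; \Sum{k=1}{r}w_{j_k}\alpha + \Sum{k=1}{r}w_{j_k}g_{i_k}.$$
The second summand lies in $H$ by construction. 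For the first summand, the hypothesis $A\cdot \alpha\subseteq \beta+H$ tells us $w_{j_k}\alpha\in \beta+H$ for each $k$, so summing $r$ such elements gives $\Sum{k=1}{r}w_{j_k}\alpha\in r\beta+H$. Therefore $\Sum{k=1}{r}w_{j_k}s_k\in r\beta+H$, as required.

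There is no real obstacle here beyond being careful about which group each object lives in: $\alpha$ and $\beta$ are in $G_0$ (not necessarily in $G$), but $g_{i_k}\in G$ and $H\leq G\leq G_0$, so the arithmetic and the cosets $\beta+H\subseteq G_0$ are well-defined. The nontriviality of $s_1\cdots s_r$ is inherited from the nontriviality of the subsequence of $T$ guaranteed by $\D_A(G/H)$.
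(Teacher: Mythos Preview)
Your proof is correct and follows essentially the same approach as the paper: translate $S$ by $-\alpha$ to land in $G$, project to $G/H$, apply the definition of $\D_A(G/H)$, and then use $A\cdot\alpha\subseteq\beta+H$ to recover the $r\beta+H$ coset. The only minor quibble is that writing $s_k\in\supp(S)$ is slightly imprecise (you mean $s_1\cdots s_r\mid S$, as you correctly say afterward), but this does not affect the argument.
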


\begin{proof}
Applying the definition of $\D_A(G/H)$ to the sequence $\phi_H(-\alpha+S)\in\Fc(G/H)$, we find a nontrivial subsequence $s_1\cdots s_r|S$ and $w_{j_i}\in A$ such that $\Sum{i=1}{r}w_{j_i}(-\alpha+s_i)\in H$, where $s_i\in \alpha+G$. Since $A\cdot \alpha\subseteq \beta+H$, we have $w_{j_i}\alpha\in \beta+H$ for all $i$, and the result follows.
\end{proof}

\bigskip

\begin{lemma}\label{keylemma} Let $G$ be a finite abelian group, let $K\leq G$ be a nontrivial subgroup, and let $\A=A_1\cdots A_r\in \fP(G)$ be a setpartition with $|A_i|\geq 2$ and $|\phi_K(A_i)|=1$ for $i=1,\ldots,r$. If $r\geq |K|-1$, then there exists a nontrivial subgroup $H\leq K$ and sub-setpartitions $\A'|\A$ and $\A''|\A'$ such that  \be\label{tryst}|\A''|=|H|-1,\;\;\;\; |\A'|\geq \min\{|\A|,\, |\A|-|K/H|+2\},\;\;\;|\sigma(\A'')|=|H|\;\;\;\mbox{ and }\;\;\;|\phi_H(A_j)|=1,\ee for all $A_j\in \supp(\A')$.
\end{lemma}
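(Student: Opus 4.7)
The plan is to induct on $|K|$, with a preliminary translation reducing to the case $0 \in A_i$ for each $i$. First I would replace each $A_i$ by $A_i - a_i$ for some chosen $a_i \in A_i$; this affects neither the sumset cardinalities nor the subgroup stabilizer questions appearing in the conclusion, and it transforms the condition that $A_i$ lie in a single $H$-coset into the equivalent condition $A_i \subseteq H$ (for any subgroup $H \leq K$). The base case $|K| = 2$ is then immediate, since each $A_i$ must equal $K$, so $H = K$, $\A'' = A_1$, $\A' = \A$ work.

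For the inductive step, set $H_1 = \mathsf{H}(\sigma(\A))$ where $\sigma(\A) = A_1 + \cdots + A_r \subseteq K$. I note $H_1$ is nontrivial: otherwise Kneser's theorem combined with $|A_i| \geq 2$ and $r \geq |K|-1$ would yield $|\sigma(\A)| \geq \sum_i |A_i| - r + 1 \geq r + 1 \geq |K|$, forcing $\sigma(\A) = K$ and hence $\mathsf{H}(\sigma(\A)) = K$, a contradiction. In \emph{Case 1}, when $H_1$ is a proper subgroup of $K$, Kneser gives $|\sigma(\A)| \geq \bigl(\sum_i |\phi_{H_1}(A_i)| - r + 1\bigr)|H_1|$, where indices $i$ with $A_i \subseteq H_1$ contribute $1$ to $|\phi_{H_1}(A_i)|$ while indices with $A_i \not\subseteq H_1$ contribute at least $2$. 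Writing $s$ for the number of indices with $A_i \subseteq H_1$, and using $|\sigma(\A)| \leq (|K/H_1| - 1)|H_1|$ (since $\sigma(\A)$ is $H_1$-periodic and a proper subset of $K$), this rearranges to $s \geq r - |K/H_1| + 2$, and in particular $s \geq |H_1| - 1$. I then apply the inductive hypothesis to the sub-setpartition $\B$ of $\A$ consisting of those $A_i \subseteq H_1$, inside the smaller group $H_1$, obtaining a subgroup $H \leq H_1$ and sub-setpartitions $\B'', \B'$ of $\B$ with $\B'' \mid \B'$ satisfying the analogous conclusions. Setting $\A'' := \B''$ and $\A' := \{A_j \in \A : A_j \subseteq H\}$, the bound $|\A'| \geq r - |K/H| + 2$ reduces, via $|K/H| = |K/H_1|\cdot|H_1/H|$, to the trivial inequality $(|K/H_1| - 1)(|H_1/H| - 1) + 1 \geq 0$.

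In \emph{Case 2}, when $H_1 = K$ so that $\sigma(\A) = K$, I construct $\A''$ directly by a greedy procedure. Starting with $S_0 = \{0\}$, at each step $k$ I choose an unused $A_{i_k}$ and define $S_k = S_{k-1} + A_{i_k}$. The key observation is that if every remaining $A_i$ were contained in $\mathsf{H}(S_{k-1})$, then their total sum would lie in $\mathsf{H}(S_{k-1})$; adding it to the $\mathsf{H}(S_{k-1})$-periodic set $S_{k-1}$ leaves $S_{k-1}$ unchanged, and combined with the used $A_{i_j}$'s already contributing to $S_{k-1}$, this forces $\sigma(\A) \subseteq S_{k-1}$, hence $S_{k-1} = K$. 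So as long as $S_{k-1} \neq K$, some unused $A_{i_k} \not\subseteq \mathsf{H}(S_{k-1})$ exists, and choosing such an $A_{i_k}$ yields $|S_k| \geq |S_{k-1}| + 1$. Since $|S_0| = 1$, we reach $S_k = K$ in at most $|K| - 1$ such steps; using $r \geq |K| - 1$, I pad (if necessary) with arbitrary unused $A_i$'s, preserving the sum, to produce exactly $|K| - 1$ chosen sets. Setting $H = K$, $\A''$ equal to this collection, and $\A' = \A$ completes Case 2.

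The principal obstacle is Case 2: one only obtains $\sigma(\A) = K$ as a bulk sumset, not a sub-setpartition of length precisely $|K| - 1$ summing to $K$, and the greedy argument together with the periodicity observation---that if $A_i \subseteq \mathsf{H}(S_{k-1})$ for all remaining $i$ then already $S_{k-1} = K$---is the key device that extracts the required selection.
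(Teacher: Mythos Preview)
Your proof is correct and follows essentially the same strategy as the paper's: split on whether $\sigma(\A)$ fills a full $K$-coset, handle the full case by a greedy selection of $|K|-1$ sets, and in the non-full case use Kneser's theorem to find a proper nontrivial stabilizer $H_1$, bound the number of sets not lying in a single $H_1$-coset by $|K/H_1|-2$, and induct. Your preliminary translation to arrange $0\in A_i$, your explicit verification that $H_1$ is nontrivial, and your detailed greedy argument (which the paper outsources to a citation) are all fine; your choice to define $\A'$ as \emph{all} $A_j\subseteq H$ rather than the inductive $\B'$ is a harmless variant, and your arithmetic $(|K/H_1|-1)(|H_1/H|-1)+1\geq 0$ correctly recovers the required bound $|\A'|\geq r-|K/H|+2$.
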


\begin{proof} Consider a counterexample $\A$ with $|K|$ minimal.
If $|\sigma(\A)|=|\Sum{i=1}{r}A_i|=|K|$, then a simple greedy algorithm shows that the lemma holds with $H=K$ (see \cite[Proposition 2.2]{EGZ-II}). Otherwise, Kneser's Theorem implies that $H=\mathsf{H}(\sigma(\A))$ is a proper and nontrivial subgroup of $K$ with at most $|K/H|-2$ sets $A_i|\A$ having $|\phi_H(A_i)|>1$. Let $\A_0|\A$ be the sub-setpartition consisting of all sets $A_j|\A$ with $|\phi_H(A_j)|=1$. Hence $$|\A_0|\geq |\A|-|K/H|+2\geq |K|-|K/H|+1\geq |H|.$$ Since $H<K$, by the minimality of $\A$ we can apply the lemma to $\A_0$. Let $\A'|\A_0$ and $\A''|\A_0$ be the resulting setpartitions and let $H'\leq H$ be the resulting subgroup. However, noting that $$|\A{\A'}^{-1}|\leq |K/H|-2+|H/H'|-1\leq |K/H'|-2,$$ we see that the result holds for $\A$ using $H'$, $\A'$ and $\A''$.
\end{proof}

\bigskip

We can now proceed with the proof of Theorem \ref{thm-weighted-gao-inductive}.

\bigskip

\begin{proof}[Proof of Theorem \ref{thm-weighted-gao-inductive}] Let $$\gcd((A-a_0)\cup \{m\})=d,$$ where $a_0\in A$ and $m$ is the exponent of $G_0$.
Observe there is no loss of generality to assume \be\label{gen-cond}\langle \gamma+G\rangle =G_0\;\;\;\mbox{ and }\;\;\; A\subseteq \{1,2,\ldots,m\}.\ee
For $i=1,2,\ldots,|S|$, let $A_i=A\cdot s_i$, where $S=s_1\cdots s_{|S|}$ with $s_i\in \gamma+G$. Let $\A=A_1\cdots A_{|S|}$ be the setpartition given by the $A_i$ and observe that $$\Sigma_{n}(W^{n},S)=\Sigma^{\cup}_n(\A)\subseteq n\delta+G.$$ Throughout the proof, we will often switch between the equivalent formulations $\Sigma^{\cup}_n(\A)$ and $\Sigma_{n}(W^{n},S)$, for various sequences, often inductively applying Theorem \ref{thm-weighted-gao-inductive} to some setpartition $\mathscr{D}=D_1\cdots D_r$ when each $D_i$ is of the required form $D_i=A\cdot g_i$, for some group element $g_i$, without explicitly defining the implicit sequence $g_1\cdots g_r$.

\bigskip

\textbf{Remark A.} Applying Lemma \ref{coset-lemma} with $K$ trivial shows that $|A\cdot g|=1$ if and only if $g\in G_0[d]$, and that if  $A\cdot g=\{g'\}$, then there is a unique $g\in G_0[d]$ such that $A\cdot g=\{g'\}$.

\bigskip

\textbf{Remark B.} In view of Lemma \ref{coset-lemma}, observe, for $S'|S$ and $H\leq G$, that there existing $\beta\in G_0$ such that $A\cdot s\subseteq \beta+H$ for all $s\in \supp(S')$ is equivalent to there existing $\alpha\in G_0$ such that $S'\in \Fc(\alpha+H)$ and $A\cdot \alpha\subseteq \beta+H$. Indeed, in view of Lemma \ref{coset-lemma}, $\alpha$ is uniquely determined mod $H$ for $\beta$, and vice versa, and we have   \be\label{lucc}\phi_H(\beta)=\phi_H(a_0\alpha).\ee  Moreover, in view of \eqref{sink}, such $\alpha$ and $\beta$ can always be chosen so that $\alpha\in \gamma+G$ and $\beta\in \delta+G$.

\bigskip

\textbf{Step 1.} First, let us show that it suffices to show there exists a subgroup $H\leq G$, $\alpha\in \gamma+G$, $\beta\in \delta+G$, and subsequence $S''|S'$, where $S'|S$ is the subsequence of all terms from $\alpha+H$, such that \eqref{setup-conds}, \eqref{joyjoy} and \eqref{goal-small} hold with \be\label{weak-small-sub} |S'|\geq |H|+\D_A(G)-1.\ee Note that \eqref{smallsubsequence} and $|S|\geq |G|+\D_A(G)-1$ imply \eqref{weak-small-sub}, so that the assumption \eqref{weak-small-sub} is weaker than \eqref{smallsubsequence}. Assume $H$ is maximal such that all these assumptions hold.
By re-indexing the $A_i$, we have (in view of \eqref{setup-conds} and \eqref{goal-small})
\ber\label{trixt1} A_i&\subseteq& \beta+H,\;\;\;\mbox{ for } i=1,2,\ldots,|S'|,\\|H|\beta+H &=& \Sigma^\cup_{|H|}(A_1\cdots A_{|S''|}).\label{trixt2}\eer Let $\A'=A_1\cdots A_{|S'|}$ and $\A''=A_1\cdots A_{|S''|}$.

\bigskip

\textbf{Sub-Step 1.1.} We begin the step by proceeding to show \be\label{magicstuff}-n\beta+\Sigma^\cup_n(\A)=\Sigma^\cup_n(-\beta+\A)=\Sigma^\cup(-\beta+\A)\;\mbox{ is }H\mbox{-periodic and contains }0.\ee
Since the first equality in \eqref{magicstuff} is trivial, we need only show $\Sigma^\cup_n(\B)=\Sigma^\cup(\B)$ is $H$-periodic and contains 0, where $\B=-\beta+\A$.
Let $\phi_H(g)\in \{0\}\cup\Sigma^\cup(\phi_H(\B))$, where $g\in G$, be arbitrary. We proceed to show
$$g+H\subseteq \Sigma^\cup_{n}(\B),$$ which will establish that $\Sigma^\cup_n(\B)=\Sigma^\cup(\B)$ is $H$-periodic and contains zero, and thus show \eqref{magicstuff}.

Since $\phi_H(g)\in\{0\}\cup \Sigma^\cup(\phi_H(\B))$, we can find a sub-setpartition (possibly empty) $\C|\B$ such that $\phi_H(g)\in \sigma(\phi_H(\C))$ and $\{0\}\notin \supp(\phi_H(\C))$ (where we adopt the convention that the sum of the empty setpartition is $\{0\}$), and hence $\C|{\B'}^{-1}\B$, where $\B'=-\beta+\A'$ (in view of \eqref{trixt1}). Observing from  \eqref{trivial-dav-bound} that $$n-|H|\geq |G|-|H|\geq |G/H|-1\geq \D(G/H)-1,$$ we see that, by repeated application of the definition of $\D(G/H)$, we may assume $|\C|\leq n-|H|$.
Note $\gamma+G=\alpha+G$ (recall $\alpha\in \gamma+G$) and that Remark B and \eqref{setup-conds} imply $A\cdot \alpha\subseteq \beta+H$. Thus, considering any sub-setpartition $\A'_0|\A'$ with $\A''|\A_0'$ and $|\A'_0|=|H|+\D_A(G)-1$ (possible in view of \eqref{joyjoy} and \eqref{weak-small-sub}), we can, by repeated application of Lemma \ref{confusing-lemma} to ${(\beta+\C)}^{-1}{\A_0'}^{-1}\A$, extend $\C$ to a setpartition $\C|(-\beta+\A_0')^{-1}\B$ with $\phi_H(g)\in \sigma(\C)$ and $$n-|H|-(\D_A(G/H)-1)\leq |\C|\leq n-|H|.$$ Then, since \eqref{joyjoy} and Lemma \ref{D-lemma} imply \be\label{gummy}|{\A''}^{-1}\A'_0|=\D_A(G)-\D_A(H)\geq \D_A(G/H)-1,\ee it follows that we can append on the appropriate number of sets from $-\beta+\A''^{-1}\A_0'$ to obtain a sub-setpartition $\C'|{(-\beta+\A'')}^{-1}\B$ of length $n-|H|$ with $\phi_H(g)\in \sigma(\phi_H(\C'))$. But now $$g+H\subseteq \Sigma^\cup_{n}(\B)$$ is clear in view of \eqref{trixt2} (recall $\A''|\A'_0$ and $\B=-\beta+\A$). Thus \eqref{magicstuff} is established, completing the sub-step.

\bigskip

\textbf{Sub-Step 1.2.} Next, we show that \eqref{smallsubsequence} holds.
Factor $\B=(-\beta+\A')\Dc_0$, let $\beta+\Dc_0=A_{j_1}\cdots A_{j_r}$, and let $\Dc=\prod_{i=1}^r(\{0\}\cup (-\beta+A_{j_i}))\in \fP(G)$ (recall $\beta\in \delta+G$, so $\beta+G=\delta+G$).

Suppose instead that \eqref{smallsubsequence} fails. Then, since \eqref{smallsubsequence} is trivial when $H=G$, we may assume
$H<G$ is proper, $$|\Dc|\geq |G/H|-1,$$ and every set dividing $\Dc$ contains two distinct elements modulo $H$ (in view of Remark B and the definitions of $\Dc$, $\A'$ and $S'$).  Consequently, we can apply Lemma \ref{keylemma} to $\phi_H(\Dc)$ (with $K$ and $G$ both taken to be $G/H$ in the lemma). Let $K/H\leq G/H$ be the resulting nontrivial subgroup and let $\Dc'|\Dc$ and $\Dc''|\Dc'$ be the resulting sub-setpartitions, say w.l.o.g. $$\Dc''=\prod_{i=1}^{|K/H|-1}(\{0\}\cup (-\beta+A_{j_i}))$$ and $\Dc'=\prod_{i=1}^{|\Dc'|}(\{0\}\cup (-\beta+A_{j_i}))$. Thus  $-\beta+A_{j_i}\subseteq K$ for $i\leq |K/H|-1$ and $\sigma(\phi_H(\Dc''))=K/H$; moreover, at least  \be\label{chowder}\min\{|\A|,\,|\A|-|G/K|+2\}\ee sets $A_j|\A$ have $-\beta+A_j\subseteq K$.

In view of \eqref{joyjoy}, \eqref{weak-small-sub}  and Lemma \ref{D-lemma}, let $\A'''|\A'{\A''}^{-1}$ be a sub-setpartition of length $\D_A(K/H)-1$, and let $\C=(-\beta+\A''')\prod_{i=1}^{|K/H|-1}(-\beta+A_{j_i})$. Observe that we have $A_j\subseteq \beta+K$ for every $A\cdot s_j=A_j|(\beta+\C)$. Thus, by Remark B, there is  $\alpha'\in G_0$ such that $s_j\in \alpha'+K$ for all $A\cdot s_j=A_j|(\beta+\C)$. Moreover, $$\phi_K(\beta)=\phi_K(a_0\alpha')$$ and $\phi_K(\alpha')$ is unique subject to $A\cdot \alpha'\subseteq \beta+K$. However, in view of \eqref{setup-conds} and Remark B, we also have $\phi_H(\beta)=\phi_H(a_0\alpha)$ with $A\cdot \alpha\subseteq \beta+H\subseteq \beta+K$. Consequently, $\phi_K(\alpha')=\phi_K(\alpha)$, so that w.l.o.g. we can assume $\alpha'=\alpha$.

Observe, since $\phi_H(-\beta+A_j)=\{0\}$ for $A_j|\A'''$, and in view of the previous paragraph, that \be\label{trueiy}\phi_H\left(\Sigma^\cup_{|K/H|-1}
\left((-\beta+\A''')\prod_{i=1}^{|K/H|-1}(-\beta+A_{j_i})\right)\right)=\sigma(\phi_H(\Dc''))=K/H\ee by the same extension arguments (via Lemma \ref{confusing-lemma}) used to establish \eqref{magicstuff}.

Extend $\A''\A'''\prod_{i=1}^{|K/H|-1}A_{j_i}|\A$ to a sub-setpartition $\C'$ of length $|K|+\D_A(K)-1$ with all sets dividing $\C'$ being contained in $\beta+K$; possible (in view of \eqref{chowder}), since $$|\A|-(|G/K|-1)\geq |G|+\D_A(G)-|G/K|\geq |K|+\D_A(K)-1$$ and since (in view of Lemma \ref{D-lemma}) \ber\nn
|\A''\A'''\prod_{i=1}^{|K/H|-1}A_{j_i}|=
\nn|H|+\D_A(H)-1+\D_A(K/H)-1+(|K/H|-1)\leq |K|+\D_A(K)-1.\eer  In view of \eqref{trueiy} and \eqref{goal-small}, it follows that $$\Sigma^\cup_{|H|+|K/H|-1}(-\beta+\A''\A'''\prod_{i=1}^{|K/H|-1}A_{j_i})=K.$$ Thus, in view of \ber\nn &&|H|+|K/H|-1\leq |K|,\\\nn
&&|\A''\A'''\prod_{i=1}^{|K/H|-1}A_{j_i}|+(|K|-(|H|+|K/H|-1))=
|H|+\D_A(H)-1+\D_A(K/H)-1\\&&+(|K/H|-1)+(|K|-|H|-|K/H|+1)\leq |K|+\D_A(K)-1,\nn\eer where the last inequality is a consequence of Lemma \ref{D-lemma}, it follows that $$\Sigma_{|K|}^\cup(-\beta+\C')=K.$$ Hence, from Remark B and \eqref{chowder}, it follows that the hypotheses assumed at the beginning of Step 1 hold with subgroup $K$, contradicting the maximality of $H$. So we may assume \eqref{smallsubsequence} holds, completing the sub-step.

\bigskip

\textbf{Sub-Step 1.3.} Finally, we conclude Step 1 by showing that \eqref{goal-big} and \eqref{extra-added=thing} hold. Since \eqref{goal-big} and \eqref{extra-added=thing} follow trivially from \eqref{smallsubsequence}, \eqref{joyjoy}, \eqref{goal-small}  and \eqref{magicstuff} when $H=G$ (taking $S_0=S''$), we may assume $H<G$ is a proper subgroup. From \eqref{magicstuff}, we have \be\label{toytoy}\Sigma^\cup_n(\B)=\Sigma^\cup(\B)=\sigma(\Dc)+H,\ee where $\Dc$ and $\Dc_0$ are as defined at the beginning of Sub-Step 1.2. Note $|\Dc_0|=|\Dc|=r$.

The remainder of the argument is a simplification of the end of Sub-Step 1.2. We take a subsequence $\A'''|\A'{\A''}^{-1}$ of length $\D_A(G/H)-1$ (possible in view of \eqref{weak-small-sub}, \eqref{joyjoy} and Lemma \ref{D-lemma}) and observe that $$\phi_H\left(\Sigma^\cup_{r}\left((-\beta+\A''')\Dc_0\right)\right)=\sigma(\phi_H(\Dc))$$ by the same extension arguments (via Lemma \ref{confusing-lemma}) used to establish \eqref{magicstuff}. As a result, taking $S_0|S$ to be the subsequence corresponding to $\A''\A'''(\beta+\Dc_0)$, we see that \eqref{goal-big} and \eqref{extra-added=thing} hold in view of \eqref{toytoy}, \eqref{magicstuff}, \eqref{goal-small}, \eqref{joyjoy} and Lemma \ref{D-lemma}, completing Step 1.

\bigskip

In view of Step 1, it now suffices to prove the theorem in the case $n=|G|$ (simply apply the case $n=|G|$ to $S$ to obtain the hypotheses of Step 1, and then apply Step 1), and so we henceforth assume $n=|G|$. We may also assume \be\label{mult-hyp}\vp_g(S)\leq \D_A(G)-1\ee for all $g\in G_0[d]$, since otherwise the hypotheses of Step 1 hold with $H$ trivial, whence Step 1 again completes the proof.

\bigskip

\textbf{Step 2.} Next, we complete the case when $|A|=1$.
We proceed by induction on $|G|$.  Since $|A|=1$, we see that the hypothesis $\gcd(A)=1$ implies w.l.o.g. $A= \{1\}$. Thus \be\label{horse}\D_A(G)=\D(G).\ee Note $|A|=1$ implies $d=m$, whence \be\label{grasss}G_0[d]=G_0[m]=G_0.\ee Hence, in view of Lemma \ref{translation-lemma} and Step 1, we may w.l.o.g. assume $\vp_0(S)=\h(S)$ and $G_0=G$.

In view of \eqref{mult-hyp}, \eqref{grasss} and \eqref{horse}, it follows that there exists a subsequence $S'|S$ with $|S'|=|G|+\D(G)-1$ and $\h(S')\leq \D(G)-1$. Since $\D(G)-1\geq \dd^*(G)$ (from \eqref{trivial-dav-bound}), we can apply Theorem \ref{CCD-d*} to $S'|S$ with $n=\D(G)-1$. If Theorem \ref{CCD-d*}(i) holds with resulting subsequence $S''|S$ of length $|S''|=|S'|$, then (recall $|S''|-(\D(G)-1)=|G|$ and note that there is natural correspondence between $k$-sums and $(|S''|-k)$-sums; see also \cite{hamconj-paper}) $$G=\Sigma_{\D(G)-1}(S'')=\sigma(S'')-\Sigma_{|G|}(S'').$$ Thus $\Sigma_{|G|}(S'')=G$, and the proof is complete in view of Step 1. This also completes the base case when $|G|=p$ is prime. If instead Theorem \ref{CCD-d*}(ii) holds, then there exists a coset $\alpha'+K$, with $K<G$ proper and nontrivial, such that all but at most $|G/K|-2$ terms of $S$ are from $\alpha'+K$.  Now applying the induction hypothesis to the subsequence of $S$ consisting of all terms from $\alpha'+K$, which has length at least $$|S|-|G/K|+2\geq |G|+\D_A(G)+1-|G/K|\geq |K|+\D_A(G)-1,$$ completes the proof in view of Step 1. So we may assume $|A|\geq 2$.

\bigskip

\textbf{Step 3.} Next, we establish some basic properties for the subgroups $G[d]$ and $G_0[d]$ under the assumption $G_0[d]\cap (\gamma+G)$ is nonempty. In view of Remark A, the hypothesis of Step 3 holds whenever there exists some $A_i|\A$ with $|A_i|=1$.

In view of \eqref{gen-cond} and the assumption $G_0[d]\cap (\gamma+G)\neq \emptyset$, it follows that $G+G_0[d]=G_0$. Thus \be\label{numberbound}G_0/G_0[d]\cong G/G[d].\ee

Since $|A|>1$, it follows that $d<m$ and so $G_0[d]$ is a proper subgroup of $G_0$, and thus, in view of \eqref{numberbound}, $G[d]$ is a proper subgroup of $G$. Moreover, by choosing the representative $\gamma\in \gamma+G$ appropriately, we can assume \be\label{task}G_0[d]\cap (\gamma+G)=\gamma+G[d].\ee Consequently, if there are at least $\D_A(G)$ sets $A_i|\A$ with $|A_i|=1$, then  \eqref{mult-hyp} and Remark A ensure that $G[d]$ is nontrivial. Next note, since $G+G_0[d]=G_0$, that \be\label{dgequal}d\cdot G_0=d\cdot (G+G_0[d])=d\cdot G+d\cdot G_0[d]=d\cdot G.\ee Finally, if $A\cdot s_j=A_j|\A$ with $s_j\in G_0[d]$, then \eqref{task} implies that $A_j=A\cdot s_j\subseteq a_0\gamma+G[d]$.

\bigskip

\textbf{Step 4. } Now we setup the induction and establish the base case.
Let $\B|\A$ be
a sub-setpartition of length $|G|+\D_A(G)-1$.
We apply the Devos-Goddyn-Mohar Theorem to $-\delta+\B\in \mathscr{S}(G)$ using $n=|G|$ (where $\delta$ is as given in the hypotheses). Hence, letting $L=\mathsf{H}(\Sigma^\cup_{|G|}(-\delta+\B))=\mathsf{H}(\Sigma^\cup_{|G|}(\B))$, we have \be\label{DGM-bound-for-us}|\Sigma^\cup_{|G|}(\mathscr{B})|\geq (\Summ{g\in G_0/L}\min\{|G|,\,\vp_g(\mathsf{S}(\phi_L(\B))\}-|G|+1)|L|.\ee If $L=G$, then the proof is complete (taking $H=G$ and $\alpha=\gamma$) in view of Step 1 and the definition of $\B$. Therefore we may assume $L<G$ is proper.

We proceed by induction on $|G|$.  The base case is when $|G|=p$ is prime, and thus $L<G$ must be trivial. Thus  from Step 3 it follows that (recall Step 3 implies $G[d]<G$ is proper) $$|G_0[d]\cap (\gamma+G)|\leq 1.$$ As a result, \eqref{mult-hyp} and Remark A imply that at least $|G|-1$ sets $A_i|\B$ have cardinality greater than one, whence \eqref{DGM-bound-for-us} yields $|\Sigma^\cup_{|G|}(\mathscr{B})|=|G|$, contradicting that $L<G$ is proper.

\bigskip

\textbf{Step 5.} Next, we handle the case when $L$ is nontrivial. If \eqref{mult-hyp2} holds for $\phi_L(\B)$, then applying the induction hypothesis to $\phi_L(\B)$ (with $n=|G|$), we contradict that $L$ is the maximal period for $\Sigma^\cup_{|G|}(\B)$ (in view of \eqref{goal-big}). Therefore we instead conclude that there exists $\delta'\in \delta+G$ such all but at most $|G/L|-2$ sets from $\B$ are contained in $\delta'+L$. Thus, letting $\C|\B$ be the sub-setpartition consisting of all sets contained in $\delta'+L$, we see that \be\label{ruth}|\C|\geq |\B|-(|G/L|-2)\geq |G|+\D_A(G)+1-|G/L|\geq |L|+\D_A(G)-1.\ee
Hence, in view of Remark B, we see that we can apply the induction hypothesis to $\C$, which  completes the proof in view of Step 1. So we now assume $|L|=1$.

\bigskip

\textbf{Step 6: } We proceed to conclude the proof by finishing the case when $L$ is trivial.  Let $\A_1|\A$ be the sub-setpartition consisting of all sets of size one. If $|\A_1|\leq \D_A(G)$, then there are at least $|G|-1$ sets $A_i|\B$ with $|A_i|\geq 2$, whence \eqref{DGM-bound-for-us} implies $|\Sigma^\cup_{|G|}(\B)|\geq |G|$, contradicting that $L<G$ is proper. Therefore we may assume $$|\A_1|\geq \D_A(G)+1,$$
whence Step 3 implies $G[d]\leq G$ is a proper and nontrivial subgroup. Consequently, $d\cdot G\leq G$ is also a proper, nontrivial subgroup (recall $d\cdot G\cong G/G[d]$).
Factor $\A=\A_1\A_2$, so $\A_2|\A$ is the sub-setpartition consisting of all sets $A_j$ of size at least two.

Suppose $|\A_1|\geq |G[d]|+\D_A(G)-1.$ Then, since  $G[d]<G$ is proper, it follows, in view of  Step 3 and Remark A, that we can apply the induction hypothesis to $\A_1$, which completes the proof in view of Step 1. So we may assume $$|\A_1|\leq |G[d]|+\D_A(G)-2.$$

Consequently, \be\label{yoda}|\A_2|\geq |S|-|G[d]|-\D_A(G)+2\geq |G|-|G[d]|+1\geq |d\cdot G|.\ee In view of \eqref{dgequal}, we have \be\label{dk}A_i=A\cdot s_i\subseteq a_0s_i+d\cdot G_0=a_0s_i+d\cdot G\ee  for all $i$. Thus, in view of \eqref{yoda} and $d\cdot G\leq G$ nontrivial, we see that we can apply Lemma \ref{keylemma} to $\A_2$ (with subgroups $d\cdot G\leq G_0$). Let $\C|\A_2$ be the resulting sub-setpartition (corresponding to $\A''$ in the lemma) and $H\leq d\cdot G<G$ the resulting nontrivial subgroup. Factor $\A=\C\Dc$.

Noting that \be\label{kiss}|\Dc|=|S|-|H|+1\geq |G|+\D_A(G)-|H|\geq |G/H|+\D_A(G)-1,\ee we see that we can apply the induction hypothesis to $\phi_H(\Dc)$. Let $K/H\leq G/H$, $\phi_H(\alpha)$, $\phi_H(\beta)$ and $\phi_H(S'),\,\phi_H(S'')\in \Fc(\phi_H(\alpha+K))$ be as given by the induction hypothesis, where $S'|S$, $S''|S'$, $\alpha\in \gamma+G$ and $\beta\in \delta+G$. Let $\B'=(A\cdot s_{j_1})\cdots (A\cdot s_{j_{|S'|}})$, where $S'=s_{j_1}\cdots s_{j_{|S'|}}$, and likewise define the setpartition $\B''$ using $S''$.

Suppose $K\leq G$ is a proper subgroup. Then, noting that $A_j\subseteq \beta+K$ for $A_j|\B'$ and that \eqref{smallsubsequence}, \eqref{kiss} and $H\leq K$ imply \ber\nn|\B'|&\geq& |\Dc|-|G/K|+1\geq|G|+\D_A(G)-|H|-|G/K|+1\\\nn&\geq & |G|+\D_A(G)-|K|-|G/K|+1\geq |K|+\D_A(G)-1,\eer we see in view of Remark B that we can apply the induction hypothesis to $\B'$, which completes the proof in view of Step 1. So we may assume $K=G$.

Since $K=G$, it follows, from \eqref{goal-small} holding for $\phi_H(\B'')$ with subgroup $K/H=G/H$,  and from \eqref{tryst} holding with subgroup $H$ for $\C$, that $$|\Sigma_{|H|-1+|G/H|}^\cup(\C\B'')|=|G|.$$ Thus, extending $\C\B''|\A$ to a sub-setpartition $\C'|\A$ with $|\C'|=|G|+\D_A(G)-1$, which is possible since $$|\C\B''|=|H|+|G/H|+\D_A(G/H)-2\leq |G|+\D_A(G)-1,$$ we see, in view of  \ber\nn &&|H|+|G/H|-1\leq |G|\\&&\nn |\C\B''|+(|G|-(|H|+|G/H|-1))=\\&&|H|+|G/H|+\D_A(G/H)-2+|G|-|H|-|G/H|+1\leq |G|+\D_A(G)-1,\nn\eer that $$|\Sigma_{|G|}^\cup(\C')|=|G|.$$ Consequently, in view of Step 1, the theorem holds with subgroup $H=G$, $\alpha=\gamma$ and $\beta=\delta$, completing the final step of the proof.
\end{proof}

\end{document}